\theoremstyle{plain}
\newtheorem{thm}{Theorem}
\newtheorem{prop}[thm]{Proposition}
\newtheorem{cor}[thm]{Corollary}
\newtheorem{lem}[thm]{Lemma}
\theoremstyle{definition}
\theoremstyle{remark}
\newtheorem*{rem*}{{\it Remark}}
\newcommand{\Le}{\leqslant}
\newcommand{\Ge}{\geqslant}
\def\is#1#2{\langle#1,#2\rangle}
\def\pre#1{{#1}_\bot}
\def\dir#1{\int^\oplus_\varLambda #1(\lambda)\,\mu(\D\lambda)}
\DeclareMathOperator{\D}{d\!}  
\DeclareMathOperator{\trace}{tr}
\def\C{\mathbb{C}}
\def\Z{\mathbb{Z}}
\def\B{\mathbb B}
\def\N{\mathbb N}
\def\kk{\EuScript K}
\def\aa{\mathcal A}
\def\hh{\EuScript H}
\def\ff{\mathbf F}
\def\tt{\mathbf T}
\def\vv{\mathcal V}
\def\ss{\mathcal S}
\def\calw{{\mathcal W}}
\def\calv{{\mathcal V}}
\def\calt{{\mathcal T}}
\def\bsb{{\mathbf B}}
\def\bsd{{\mathbf D}}
\def\bor{\mathfrak{B}}
\def\H{\mathfrak{H}}
\def\K{\mathfrak{K}}
\title [A note on $k$-hyperreflexivity of Toeplitz-harmonic subspaces]{A note on $k$-hyperreflexivity of Toeplitz-harmonic subspaces}
\author{P.\ Budzy\'nski, K.\ Piwowarczyk \and M.\ Ptak}
\address{Piotr Budzy\'nski, Instytut Matematyczny Polskiej Akademii Nauk, ul.\ \'{s}w.\ Tomasza 30, 31-027 Krak\'ow, Poland}
\curraddr{Katedra Zastosowa\'n Matematyki, Uniwersytet Rolniczy w Krakowie, ul. Balicka 253c, 30-198 Krak\'ow, Poland}
    \email{piotr.budzynski@ur.krakow.pl}
\address{Kamila Piwowarczyk, Katedra Zastosowa\'n Matematyki, Uniwersytet Rolniczy w Krakowie, ul. Balicka 253c, 30-198 Krak\'ow, Poland}
    \email{kamila.piwowarczyk@ur.krakow.pl}
\address{Marek Ptak, Katedra Zastosowa\'n Matematyki, Uniwersytet Rolniczy w Krakowie, ul. Balicka 253c, 30-198 Krak\'ow, Poland}
    \email{rmptak@cyf-kr.edu.pl}
\subjclass[2010]{Primary: 47L05; Secondary: 47L75}
\keywords{$k$-hyperreflexive subspace, direct integral, tensor
product, isometry, quasinormal operator}
\thanks{The research of the first author was partially supported by the NCN (National Science Center) grant DEC-2011/01/D/ST1/05805.}
\begin{document}
\begin{abstract}
The 2-hyperreflexivity of  Toeplitz-harmonic type subspace generated by an isometry or a quasinormal operator is shown. The $k$-hyperreflexivity of the tensor product $\ss\otimes \vv$ of a $k$-hyperreflexive decomposable subspace $\ss$ and an abelian von Neumann algebra $\vv$ is established.
\end{abstract}
\maketitle
\section{Introduction}
The concepts of reflexivity, transitivity and hyperreflexivity arise from the invariant subspace problem. An algebra  of operators is reflexive if it has so many (common) invariant subspaces that they determine the algebra itself or, equivalently, there are so many rank one operators in the preanihilator of the algebra that they characterize the algebra. The latter condition enables generalization of the reflexivity concept to subspaces of operators. The transitivity, in contrast to the reflexivity, means that there are no rank one operators in the preanihilator. A subspace of operators is hyperreflexive if the standard (norm) distance from any operator to the subspace is controlled by the distance induced by rank one operators (equivalently, in case of an algebra, the distance induced by invariant subspaces). The concepts  of $k$-reflexivity and $k$-hyperreflexivity are natural generalizations of reflexivity and hyperreflexivity (rank one operators are replaced by rank $k$ operators in relevant conditions). Certain subspaces, as being transitive, are far away from being reflexive. Nevertheless, they turn out to be 2-reflexive or 2-hyperreflexive. The space of all Toeplitz operators on the Hardy space on the unit disc is a primary example for this -- it is transitive (cf. \cite{az-pt1}) and $2$-hyperreflexive (cf. \cite{kl-pt1}). The same phenomenon occurs in the case of the space of all Toeplitz operators on the Hardy space on the polydisc (cf. \cite{pt1}).

The smallest weak${^*}$ closed subspace containing all powers of a given operator $A$ and all powers of its adjoint is called a Toeplitz-harmonic subspace generated by $A$. Clearly, the space $\calt(\mathbb{D})$ of all Toeplitz operators on the Hardy space $H^2(\mathbb{D})$ is a Toeplitz-harmonic subspace generated by the operator $T_z$ of multiplication by the independent variable. Hyperreflexivity of a Toeplitz-harmonic subspace generated by a $C_{00}$ contraction was shown in \cite{co-pt}. This result, in particular, implies hyperreflexivity of the Toeplitz-harmonic subspace generated by the Bergman operator $T_z$ -- the operator of multiplication by independent variable acting on the Bergman space on the unit disc. On the other hand, the Toeplitz-harmonic subspace generated by $T_z$ acting on $H^2(\mathbb{D})$ is transitive, thus it is not hyperreflexive. In view of this, it seems natural to ask about $k$-hyperreflexivity ($k\Ge 2)$ of a Toeplitz-harmonic subspace generated by an isometry or a quasinormal operator. In this paper we prove that such a subspace is 2-hyperreflexive (cf.\ Theorems \ref{012a} and \ref{023}). The proof is led via direct integral theory. For this purpose we prove  that tensor product of a $k$-hyperreflexive decomposable subspace which has property $\mathbb{A}_{1/k}(1)$ and an abelian von Neumann algebra is $k$-hyperreflexive (cf.\ Proposition \ref{015}).
\section{Preliminaries}
In all what follows, by $\Z$ and by $\N$ we denote the set of all integers and all non-negative integers, respectively.   Denote also by  $\hat\N$  the set $\N\cup\{\infty\}$.  If $X$ is a linear space and $Y$ is a subset of $X$, then $\text{lin }Y$ stands for the linear span of $Y$.

Suppose that $\hh$ is a (complex and separable) Hilbert space. Let $\bsb(\hh)$ denote the Banach algebra of all bounded linear operators on
$\hh$. For $k\in\N$, $\ff_k(\hh)$ stands for the set of operators on $\hh$ of rank at most
$k$. The set of  trace class operators on $\hh$ will be denoted by $\tt(\hh)$.  If $T\in\bsb(\hh)$, then $\calw(T)$ stands for the WOT closed algebra generated by $T$ and the identity operator $I$. For a family $\ss$ of operators in $\bsb(\hh)$, $\textrm{w$^*$-cl\,}\ss$  denotes the weak$^*$ closure of $\ss$.

Suppose that $\ss\subseteq\bsb(\hh)$ is a (linear) subspace. For an operator $A\in\bsb(\hh)$ and $k\in\N$ we consider the following
quantities
    \begin{align*}
    d(A,\ss)=\inf\{\|A-T\|\colon T\in\ss\},\notag\quad \alpha_k(A,\ss)=\sup\{|\is{A}{t}| \colon t\in\pre{\ss}\cap\B_k(\hh)\},
    \end{align*}
where $\is{A}{t}=\trace(At)$, $\pre{\ss}=\{t\in\tt(\hh)\colon \is{T}{t}=0 \text{ for all }T\in\ss\}$ and $\B_k(\hh)$ stands for the unit ball in $\ff_k(\hh)$
(with respect to trace norm $\|\cdot\|_1$). Recall that $d(A,\ss)\Ge \alpha_k(A,\ss)$ for every $A\in\bsb(\hh)$. The subspace $\ss$ is called {\em $k$-hyperreflexive} if there is a constant $C$ such that
    \begin{align}\label{001}
    d(A,\ss)\Le C\,\alpha_k(A,\ss),\quad A\in\bsb(\hh).
    \end{align}
By $\kappa_k(\ss)$ we denote the infimum of the collection of all constants $C$ such that \eqref{001} holds. An operator $T\in\bsb(\hh)$ is said to be $k$-hyperreflexive if $\calw(T)$ is $k$-hyperreflexive. For more background of reflexivity and $k$-hyperreflexivity see \cite{co} and \cite{kl-pt1}.

It is known that $k$-hyperreflexivity is not hereditary in general, i.e., a subspace of $k$-hyperreflexive subspace need not to be $k$-hyperreflexive itself. The situation improves if the subspace has property $\mathbb{A}_{1/k}(r)$ (cf. \cite[Proposition 3.8]{kl-pt1}). Let $k\in\mathbb{N}$ and $r\in[1,\infty)$. Recall that a weak$^*$ closed subspace $\ss$ of $\bsb(\hh)$ has property $\mathbb{A}_{1/k}(r)$ if for every
weak$^*$ continuous functional $\psi\colon\bsb(\hh)\to\C$ and every $\varepsilon>0$ there is $f\in\ff_k(\hh)$ such that
$\|f\|_1\Le (r+\varepsilon)\|\psi\|_1$ and $\psi(T)=\is{T}{f}$ for all $T\in\ss$. Direct integral of subspaces which have property
$\mathbb{A}_{1/k}(1)$ has this property as well (cf.\ \cite[Theorem 3.6]{ha-no} and Lemma \ref{011+}). We will frequently use the following fact, which is a generalization of a result due to Kraus and Larson in \cite[Theorem 3.3]{kra-lar}.
\begin{lem}\label{hyp-sub}\mbox{\cite[Proposition 3.8]{kl-pt1}}
Let $k\in\N$ and $r\in[1,\infty)$. If $\ss$ is a $k$-hyper\-reflexive subspace of $\bsb(\hh)$ which has property $\mathbb{A}_{1/k}(r)$, then any weak$^*$ closed subspace $\ss_1$ of $\ss$ is $k$-hyperreflexive and $\kappa_k(\ss_1)\Le r + (r+1)\kappa_k(\ss)$.
\end{lem}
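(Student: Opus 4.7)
The plan is to bound $d(A,\ss_1)$ by duality against annihilators, then to use property $\mathbb{A}_{1/k}(r)$ to replace an arbitrary trace-class annihilator of $\ss_1$ by a rank-$k$ representative \emph{modulo} $\pre{\ss}$, and finally to absorb the resulting ambient error by the $k$-hyperreflexivity of $\ss$.

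By Hahn--Banach, applied in the duality $\bsb(\hh)=\tt(\hh)^*$ together with the weak$^*$ closedness of $\ss_1$, one has $d(A,\ss_1)=\sup\{|\is{A}{t}|:t\in\pre{\ss_1},\ \|t\|_1\le 1\}$; fix such a $t$. Since $\ss_1\subseteq\ss$, the annihilator inclusion $\pre{\ss}\subseteq\pre{\ss_1}$ holds and hence $\alpha_k(A,\ss)\le\alpha_k(A,\ss_1)$. The functional $\psi(X)=\is{X}{t}$ is weak$^*$ continuous with $\|\psi\|_1=\|t\|_1\le 1$, so property $\mathbb{A}_{1/k}(r)$ applied to $\ss$ yields, for every $\varepsilon>0$, some $f\in\ff_k(\hh)$ with $\|f\|_1\le r+\varepsilon$ satisfying $\is{X}{f}=\is{X}{t}$ for all $X\in\ss$. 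Because $\ss_1\subseteq\ss$ and $t\in\pre{\ss_1}$, automatically $f\in\pre{\ss_1}\cap\ff_k(\hh)$, so $|\is{A}{f}|\le(r+\varepsilon)\alpha_k(A,\ss_1)$, while $t-f\in\pre{\ss}$.

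For any $T\in\ss$ the identity $\is{T}{t-f}=0$ gives the algebraic rearrangement $\is{A}{t}=\is{A-T}{t-f}+\is{A}{f}$, whence
\[
|\is{A}{t}|\le \|A-T\|(1+r+\varepsilon)+(r+\varepsilon)\alpha_k(A,\ss_1).
\]
Using $k$-hyperreflexivity of $\ss$, choose $T\in\ss$ with $\|A-T\|\le(\kappa_k(\ss)+\delta)\alpha_k(A,\ss)\le(\kappa_k(\ss)+\delta)\alpha_k(A,\ss_1)$. Taking the supremum over $t$ and letting $\delta,\varepsilon\to 0$ produces
\[
d(A,\ss_1)\le\bigl(r+(r+1)\kappa_k(\ss)\bigr)\alpha_k(A,\ss_1),
\]
which yields both the $k$-hyperreflexivity of $\ss_1$ and the claimed estimate on $\kappa_k(\ss_1)$.

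The conceptual heart of the argument is the use of property $\mathbb{A}_{1/k}(r)$: it allows one to trade a generic trace-class annihilator $t$ of $\ss_1$ for a rank-$k$ annihilator $f$ of $\ss_1$ at the cost of a multiplicative factor $r$ and an error $t-f$ that, crucially, belongs to $\pre{\ss}$ rather than merely $\pre{\ss_1}$ — exactly what is needed so that any $T\in\ss$ is orthogonal to it. Once this trick is in place the rest is triangle-inequality bookkeeping; the only point to watch is that $\|t-f\|_1\le 1+r+\varepsilon$ (rather than $r+\varepsilon$), which is precisely what produces the $(r+1)$ coefficient in front of $\kappa_k(\ss)$ in the final bound.
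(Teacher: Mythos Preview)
Your argument is correct. The paper does not supply a proof of this lemma at all: it is stated with a citation to \cite[Proposition~3.8]{kl-pt1} (which in turn generalizes \cite[Theorem~3.3]{kra-lar}), and no argument is given in the present paper. Your proof is the standard Kraus--Larson type argument adapted to the rank-$k$ setting, and is essentially the one in the cited reference: dualize via Hahn--Banach, use property $\mathbb{A}_{1/k}(r)$ to replace $t\in\pre{\ss_1}$ by a rank-$k$ annihilator $f$ with $t-f\in\pre{\ss}$, and then absorb the $t-f$ piece using the $k$-hyperreflexivity of $\ss$.

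One minor remark on presentation: when you apply property $\mathbb{A}_{1/k}(r)$ you write $\|\psi\|_1=\|t\|_1\le 1$. Depending on how the norm in the definition of $\mathbb{A}_{1/k}(r)$ is read (trace norm of a representing operator versus the quotient norm on $\tt(\hh)/\pre{\ss}$), this line could use a word of clarification; in either reading the inequality $\|f\|_1\le r+\varepsilon$ follows, since the quotient norm is dominated by $\|t\|_1$. Otherwise the bookkeeping is clean and the constant $r+(r+1)\kappa_k(\ss)$ falls out exactly as stated.
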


Let us now recall some basic definitions concerning direct integrals of subspaces (we refer the reader to monographs \cite{di} and \cite{sch} for more information on the direct integral theory). Let $\big(\varLambda,\bor, \mu\big)$ be a measure space, where $\varLambda$ is a separable metric space, $\bor$ is the $\sigma$-algebra of all Borel subsets of $\varLambda$ and $\mu$ is a $\sigma$-finite regular Borel measure on $\varLambda$. Let $\hh_1\subseteq\hh_2\subseteq\ldots\subseteq\hh_\infty$ be a sequence of Hilbert spaces. Suppose that $\{\varLambda_n\colon n\in\hat\N\}\subseteq\bor$ is a partition of $\varLambda$. Let $\hh(\lambda)=\hh_n$ for $\lambda\in\varLambda_n$ and $n\in\hat\N$. Then $\H=\dir{\hh}$
denotes the Hilbert space of all (equivalence classes of) $\bor$-measurable $\hh_\infty$-valued functions on $\varLambda$ such that for $\mu$-a.e. $\lambda\in\varLambda$, $f(\lambda)\in\hh(\lambda)$ and $\int_\varLambda\|f(\lambda)\|^2\mu(\D\lambda)<\infty$. Throughout the rest of the paper,
$\bsd(\H)$ (resp. $\bsd'(\H)$) will stand for the set of all diagonal (resp. decomposable) operators in $\bsb(\H)$ ($\bsd'(\H)$ is indeed a commutant of $\bsd(\H)$; cf.\ \cite[Lemma I.3.2]{sch}). Suppose that $\ss$ is an weak$^*$ closed subspace of $\bsd'(\H)$ such that there exists a countable generating set $\{T_{n}\colon n\in\N\}$ for $\ss$. Such a subspace $\ss$ is said to be {\em decomposable}. For $\lambda\in\varLambda$, let $\ss(\lambda)$ be the weak$^*$ closed subspace generated by $\{T_n(\lambda)\colon n\in\N\}$. It is a matter of verification that definition of $\ss(\lambda)$ does not depend on the choice of a generating set (cf. \cite[p. 1397]{kl-pt3}). The family $\{\ss(\lambda)\colon \lambda\in\varLambda\}$ is called {\em decomposition} of $\ss$. For decomposable $\ss$ we define the subspace $\ss_\bsd$ by
    \begin{align*}
    \ss_\bsd=\textrm{w$^*$-cl\,}\textrm{lin} \{D\, T: D \in \bsd(\H), T \in \ss \};
    \end{align*}
it is an analog of the algebra $\dir{\aa}$ appearing in the reduction theory of von Neumann algebras.
\section{$k$-hyperreflexivity}
We begin our investigations of $k$-hyperreflexivity with variants of results due to Hadwin (cf. \cite[Theorems 3.8 and 6.16]{ha}), Hadwin and Nordgren (cf. \cite[Theorem 3.6]{ha-no}). Originally, they concerned hyperreflexivity and property $\mathbb{A}_{1}(r)$ but the claims are valid for $k$-hyperreflexivity and $\mathbb{A}_{1/k}(r)$, respectively, as well (here, and later, ``hyperreflexivity'' means ``$1$-hyperreflexivity''). Since the results can be proved in a very similar fashion to the original ones, we omit the proofs.
\begin{lem}\label{011+}
Let $k\in\N$, $r\in[1,\infty)$ and $K\in (0,\infty)$. Suppose $\ss$ is a decomposable and $\{\ss(\lambda)\colon\lambda\in\varLambda\}$ is its
decomposition. Then the following holds.
\begin{enumerate}
\item[(i)] If, for $\mu$-a.e. $\lambda \in \varLambda$, $\ss(\lambda)$ is $k$-hyperreflexive and $\kappa_k(\ss(\lambda)) \leq K$, then $\ss_\bsd$ is $k$-hyperreflexive and $\kappa_k(\ss_\bsd) \leq 2+3K$.
\item[(ii)] If, for $\mu$-a.e. $\lambda\in\varLambda$, $\ss(\lambda)$ has property $\mathbb{A}_{1/k}(r)$, then $\ss_\bsd$ has property $\mathbb{A}_{1/k}(r)$.
\end{enumerate}
\end{lem}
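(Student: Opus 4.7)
Both parts follow the direct-integral method of Hadwin \cite{ha} (for (i)) and Hadwin--Nordgren \cite{ha-no} (for (ii)), with the rank-one arguments upgraded to rank $k$ by cosmetic replacements. The backbone of each proof is a fibrewise measurable-selection theorem on the Polish set of rank-at-most-$k$ operators in $\bsb(\hh(\lambda))$, together with the fact that a $\mu$-measurable field of rank-$\le k$ operators assembles into a decomposable operator on $\H$ whose global rank is again at most $k$ and whose trace norm equals $\int_\varLambda\|f(\lambda)\|_1\,\mu(\D\lambda)$.

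\textbf{Proof of (ii).} Given a weak$^*$-continuous $\psi\colon\bsb(\H)\to\C$ represented by $t\in\tt(\H)$, I would first disintegrate $t$ (equivalently, its compression onto $\bsd'(\H)$) against $\bsd(\H)$ to obtain a $\mu$-measurable field of weak$^*$-continuous functionals $\psi_\lambda$ on $\bsb(\hh(\lambda))$ with $\int_\varLambda\|\psi_\lambda\|_1\,\mu(\D\lambda)\le\|\psi\|_1$. Fibrewise the hypothesis yields $f(\lambda)\in\ff_k(\hh(\lambda))$ with $\|f(\lambda)\|_1\le(r+\varepsilon/2)\|\psi_\lambda\|_1$ and $\psi_\lambda(T)=\is{T}{f(\lambda)}$ for every $T\in\ss(\lambda)$. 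A von Neumann-type measurable-selection argument (identical in substance to the one in \cite{ha-no}) then delivers a measurable choice $\lambda\mapsto f(\lambda)$, and $f=\dir{f}$ is a rank-$\le k$ operator of trace norm at most $(r+\varepsilon)\|\psi\|_1$ representing $\psi$ on the generators $DT$ ($D\in\bsd(\H)$, $T\in\ss$) and hence, by weak$^*$ continuity, on all of $\ss_\bsd$.

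\textbf{Proof of (i).} I would first produce $A'\in\bsd'(\H)$ with $\|A-A'\|\le 2\alpha_k(A,\ss_\bsd)$ by averaging over the unitaries of $\bsd(\H)$: since $U^*\ss_\bsd U=\ss_\bsd$ for every such $U$ (because $\ss\subseteq\bsd'(\H)$ and $\bsd(\H)$ is abelian), the rank-$k$ preannihilator of $\ss_\bsd$ is $U$-invariant, and a weak$^*$-compactness/convex-combination argument drives $A$ into the relative commutant modulo $2\alpha_k(A,\ss_\bsd)$. The triangle inequality then gives $\alpha_k(A',\ss_\bsd)\le 3\alpha_k(A,\ss_\bsd)$. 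Next, since $A'=\dir{A'}$ has fibres $A'(\lambda)$, the hypothesis provides $d(A'(\lambda),\ss(\lambda))\le K\alpha_k(A'(\lambda),\ss(\lambda))$ almost everywhere; selecting measurably $T(\lambda)\in\ss(\lambda)$ close to optimal and assembling $T=\dir{T}\in\ss_\bsd$, the identity $\|A'-T\|=\esup_{\lambda}\|A'(\lambda)-T(\lambda)\|$ combined with the key inequality $\esup_{\lambda}\alpha_k(A'(\lambda),\ss(\lambda))\le\alpha_k(A',\ss_\bsd)$ yields $d(A',\ss_\bsd)\le 3K\alpha_k(A,\ss_\bsd)$. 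Adding $\|A-A'\|$ produces the stated constant $2+3K$.

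\textbf{Main obstacle.} The technical heart of both parts is the measurable-selection-plus-reassembly step, most notably the disintegration inequality $\esup_{\lambda}\alpha_k(A'(\lambda),\ss(\lambda))\le\alpha_k(A',\ss_\bsd)$ used in Part (i). Establishing it amounts to disintegrating trace-class operators against the decomposable subspace $\ss_\bsd$ and is exactly where the hypothesis that $\ss$ is \emph{decomposable} (rather than merely weak$^*$ closed) is essential; once it is in hand, the rest of the proof is a careful but routine lift of the rank-one arguments of \cite{ha,ha-no}.
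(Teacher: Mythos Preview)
The paper itself omits the proof, simply remarking that it is a straightforward adaptation of Hadwin \cite{ha} and Hadwin--Nordgren \cite{ha-no} with rank-one operators replaced by rank-$k$ operators. Your overall strategy is therefore exactly the one the paper intends.

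That said, there is a genuine error in your ``Approach'' paragraph which propagates into your proof of (ii). You assert that ``a $\mu$-measurable field of rank-$\le k$ operators assembles into a decomposable operator on $\H$ whose global rank is again at most $k$''. This is false: the decomposable operator $\int^\oplus_\varLambda f(\lambda)\,\mu(\D\lambda)$ almost never has finite rank. For instance, take $\hh(\lambda)=\C$ for all $\lambda$ (so $\H=L^2(\varLambda,\mu)$) and $f(\lambda)=1$; each fibre has rank one, but the assembled operator is the identity on $L^2(\varLambda,\mu)$. Your formula for the trace norm is correct, but the rank claim is not, and without it the operator you produce in (ii) is not in $\ff_k(\H)$, so property $\mathbb A_{1/k}(r)$ is not verified.

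The actual mechanism in the Hadwin--Nordgren argument is different: one integrates \emph{vectors}, not operators. Writing each $f(\lambda)$ in singular-value form $f(\lambda)=\sum_{j=1}^{k}s_j(\lambda)\,u_j(\lambda)\otimes v_j(\lambda)$ with measurable choices, set $x_j=\int^\oplus s_j(\lambda)^{1/2}u_j(\lambda)\,\mu(\D\lambda)$ and $y_j=\int^\oplus s_j(\lambda)^{1/2}v_j(\lambda)\,\mu(\D\lambda)$ in $\H$, and put $f=\sum_{j=1}^{k}x_j\otimes y_j$. This $f$ is genuinely of rank at most $k$, it is \emph{not} decomposable, and for every decomposable $T=\int^\oplus T(\lambda)$ one has $\is{T}{f}=\int_\varLambda\is{T(\lambda)}{f(\lambda)}\,\mu(\D\lambda)$, which is what is needed both for (ii) and for the ``key inequality'' $\esup_\lambda\alpha_k(A'(\lambda),\ss(\lambda))\le\alpha_k(A',\ss_\bsd)$ in (i). The norm control $\|f\|_1\le\int\|f(\lambda)\|_1\,\mu(\D\lambda)$ follows from $\|x_j\|\,\|y_j\|\le\tfrac12(\|x_j\|^2+\|y_j\|^2)=\int s_j(\lambda)\,\mu(\D\lambda)$. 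Once you replace your ``decomposable reassembly'' by this construction, the rest of your outline goes through as written.
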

If $\calv$ is an abelian von Neumann algebra, then by the reduction theory (cf. \cite[Theorem I.2.6]{sch}), $\calv$ is (up to unitary equivalence) the diagonal algebra $\bsd(\K)$ corresponding to a direct integral decomposition of a underlying Hilbert space $\kk$. The algebra $\mathbb{C} I$ is hyperreflexive, $\kappa_1(\calv)\Le 1$ (cf. \cite[Proposition 3.11]{kra-lar}) and it has property $\mathbb{A}_1(1)$ (cf. \cite[Proposition 60.1]{co}). Hence, by Lemma \ref{011+}, we get the following (the part concerning hyperreflexivity is contained in \cite[Theorem 3.5]{ro}).
\begin{cor}\label{abelian-hyp}
Let $\calv$ be an abelian von Neumann algebra. Then $\calv$ is hyperreflexive, $\kappa_1(\calv)\Le 5$ and it has property $\mathbb{A}_1(1)$.
\end{cor}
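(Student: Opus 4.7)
The plan is to realize $\calv$ as the $\ss_\bsd$ associated to the trivial constant decomposable subspace $\ss = \C I$ and then apply Lemma~\ref{011+} with $k=1$, $K=1$, $r=1$.

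First I would invoke the reduction theory (cf. \cite[Theorem I.2.6]{sch}) quoted in the paragraph preceding the statement to identify, up to unitary equivalence, $\calv$ with the diagonal algebra $\bsd(\K)$ corresponding to a direct integral decomposition $\K=\dir{\hh}$. Next, regard $\ss:=\C I$ as a weak$^{*}$ closed subspace of $\bsd'(\K)$; it is trivially decomposable, with the constant generating family $\{I\}$, so $\ss(\lambda)=\C I$ for every $\lambda\in\varLambda$. By the definition of $\ss_\bsd$ recalled in Section~2, one checks that
\[
\ss_\bsd = \textrm{w$^*$-cl\,}\textrm{lin}\{D\cdot I \colon D\in\bsd(\K)\}=\bsd(\K),
\]
which under the identification above is exactly $\calv$.

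Now the cited facts provide exactly the ingredients needed for Lemma~\ref{011+}: by \cite[Proposition 3.11]{kra-lar}, $\C I$ is hyperreflexive with $\kappa_1(\C I)\le 1$, and by \cite[Proposition 60.1]{co}, $\C I$ has property $\mathbb{A}_1(1)$. Applying Lemma~\ref{011+}(i) with $K=1$ yields that $\ss_\bsd$ is hyperreflexive with
\[
\kappa_1(\ss_\bsd)\le 2+3\cdot 1 = 5,
\]
while Lemma~\ref{011+}(ii) with $r=1$ yields that $\ss_\bsd$ has property $\mathbb{A}_{1}(1)$. Translating back through the unitary equivalence $\calv\cong\bsd(\K)=\ss_\bsd$ gives the corollary.

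The argument is essentially bookkeeping; the only point that one must be careful about is verifying that the constant assignment $\lambda\mapsto \C I$ really is the decomposition of $\ss = \C I$ in the sense of Section~2, and that the weak$^{*}$ closed linear span of $\bsd(\K)\cdot I$ coincides with $\bsd(\K)$ itself. Both are immediate from the definitions, so no genuine obstacle arises.
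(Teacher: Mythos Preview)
Your argument is correct and follows exactly the approach indicated in the paper: identify $\calv$ with $\bsd(\K)$ via reduction theory, recognize $\bsd(\K)$ as $(\C I)_\bsd$, and feed the facts $\kappa_1(\C I)\le 1$ and $\C I$ has $\mathbb{A}_1(1)$ into Lemma~\ref{011+} with $K=r=1$. There is nothing to add or correct.
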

The next lemma is an analog of \cite[Proposition II.3.4]{di}. It reveals a relation between direct integrals of a constant (up to the unitary equivalence) field of subspaces and tensor products. The proof is essentially the same as of the original result so we left it to the reader. A piece of notation is required: if $\mathcal{V}\subset \bsb(\hh)$ and $\mathcal{N}\subset \bsb(\kk)$ are weak$^*$ closed subspaces, then $\mathcal{V}\otimes\mathcal{N}$ denotes the weak$^*$ closed subspace  of $\bsb(\hh\otimes\kk)$ generated by the set $\{A\otimes B\colon A\in\mathcal{V}, B\in\mathcal{N}\}$.
\begin{lem}\label{013}
Let $\ss_1$ be a weak$^*$ closed subspace of $\bsb(\hh)$. Let $\ss\subset\bsb(\H)$ be a de\-com\-posable subspace and $\{\ss(\lambda)\colon\lambda\in\varLambda\}$ be its decomposition.  Suppose, for every $\lambda\in\varLambda$, there is a unitary operator
$U(\lambda)\in\bsb(\hh,\hh(\lambda))$ such that $U(\lambda) \ss_1  U(\lambda)^{-1}=\ss(\lambda)$. Then there is a unitary operator
$U\colon \H\otimes\hh\to\H$ such that $U\big(\bsd(\H)\otimes \ss_1\big)U^{-1}=\ss_\bsd$.
\end{lem}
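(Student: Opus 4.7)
The assumption provides a pointwise unitary $U(\lambda):\hh\to\hh(\lambda)$ for every $\lambda$, which forces the field $\lambda\mapsto\hh(\lambda)$ to be constant up to unitary equivalence, and so $\H$ is naturally identified with the constant-fibre direct integral realised as $L^{2}(\mu)\otimes\hh$. My plan is to promote the pointwise data $\{U(\lambda)\}_{\lambda}$ to a measurable field, assemble a global unitary $U$ from it, and then verify the subspace identity on generators using weak$^{*}$ closure.

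The first, and technically the most delicate, step is to extract a \emph{measurable} selection of the unitaries $U(\lambda)$. To do so, I fix a countable generating set $\{T_{n}:n\in\N\}$ of $\ss$; decomposability of $\ss$ gives that $\lambda\mapsto T_{n}(\lambda)$ is a measurable field, and the generated fibres satisfy $\ss(\lambda)=U(\lambda)\ss_{1}U(\lambda)^{-1}$. A standard measurable selection argument, the same one underlying the proof of \cite[Proposition II.3.4]{di}, yields a measurable choice $\lambda\mapsto U(\lambda)$. With this in hand, $U$ is defined on simple tensors by
\[
(U(f\otimes\xi))(\lambda)=f(\lambda)\,U(\lambda)\xi,\qquad f\in L^{2}(\mu),\ \xi\in\hh,
\]
and extended by linearity and continuity; fibrewise unitarity makes $U$ isometric, and density of simple tensors gives surjectivity.

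The identity $U(\bsd(\H)\otimes\ss_{1})U^{-1}=\ss_\bsd$ is then checked on generators. For $\phi\in L^{\infty}(\mu)$, identified with the diagonal operator $D_\phi\in\bsd(\H)$, and for $A\in\ss_{1}$, a direct computation shows that $U(D_\phi\otimes A)U^{-1}$ is the decomposable operator with fibre $\phi(\lambda)\,U(\lambda)AU(\lambda)^{-1}\in\C\cdot\ss(\lambda)$, which by construction sits inside $\ss_\bsd$. Conversely, every generator $DT$ of $\ss_\bsd$ with $D\in\bsd(\H)$ and $T\in\ss$ pulls back through $U^{-1}$ to the product of a diagonal operator and an element whose fibre at $\lambda$ belongs to $\ss_{1}$, hence lies in $\bsd(\H)\otimes\ss_{1}$. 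Since both sides are weak$^{*}$-closed, matching on generators implies equality.

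The main obstacle is ensuring measurability of $\lambda\mapsto U(\lambda)$ and, at a finer level, checking that the passage from von-Neumann-algebraic direct integrals (to which Dixmier's original argument applies directly) to weak$^{*}$-closed \emph{subspaces} genuinely preserves the generator-by-generator reasoning. Once these are handled, the rest is a routine fibrewise identification essentially identical to \cite[Proposition II.3.4]{di}, which is why the authors, like Dixmier, leave the details to the reader.
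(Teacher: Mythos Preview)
Your proposal is correct and follows exactly the route the paper intends: the authors explicitly state that the proof ``is essentially the same as of the original result'' (namely \cite[Proposition~II.3.4]{di}) and leave it to the reader, and your sketch---measurable selection of the fibrewise unitaries, assembly into a global $U$, and verification of the subspace identity on generators via weak$^{*}$ closure---is precisely that argument adapted from von Neumann algebras to weak$^{*}$-closed subspaces. The only point worth flagging is a harmless notational slippage (present already in the paper's statement): your $U$ is really defined on $L^{2}(\mu)\otimes\hh$, with $\bsd(\H)$ tacitly identified with $L^{\infty}(\mu)$ acting on $L^{2}(\mu)$, rather than literally on $\H\otimes\hh$.
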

We are now ready to prove $k$-hyperreflexivity of the tensor product of some sub\-spaces. This is related to a result due to S. Rosenoer concerning
hyperreflexivity of the tensor product of a hyperreflexive von Neumann algebra and the algebra of all analytic Toeplitz operators on the Hardy space $H^2(\mathbb{D})$ (cf. \cite[Theorem 2]{ro2}).
\begin{prop}\label{015}
Let $\vv\subseteq\bsb(\kk)$ is an abelian von Neumann algebra. Let $\ss\subset\bsb(\H)$ be a de\-com\-posable subspace and $\{\ss(\lambda)\colon\lambda\in\varLambda\}$ be its decomposition. If $\ss$ is $k$-hyperreflexive  and has property $\mathbb{A}_{1/k}(1)$, then every weak$^*$ closed subspace $\ss_1$ of $\ss\otimes \vv$ is $k$-hyperreflexive and $\kappa_k(\ss_1)\Le 5+6\kappa_k(\ss)$. If $\ss_1=\ss\otimes \vv$, then $\kappa_k(\ss_1)\Le 2+3\kappa_k(\ss)$.
\end{prop}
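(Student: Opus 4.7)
The proof decomposes naturally into two steps: a principal case $\ss_1=\ss\otimes\vv$ and a passage to arbitrary weak$^*$ closed subspaces. I would handle the latter first, conditionally: once I show $\ss\otimes\vv$ is $k$-hyperreflexive with $\kappa_k(\ss\otimes\vv)\Le 2+3\kappa_k(\ss)$ and has property $\mathbb{A}_{1/k}(1)$, Lemma \ref{hyp-sub} applied with $r=1$ immediately delivers
\begin{align*}
\kappa_k(\ss_1)\Le 1+2\kappa_k(\ss\otimes\vv)\Le 1+2(2+3\kappa_k(\ss))=5+6\kappa_k(\ss).
\end{align*}

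For the principal case, the reduction theory \cite[Theorem I.2.6]{sch} lets me assume, up to unitary equivalence, that $\vv=\bsd(\K)$ for a direct integral $\K=\int^\oplus_{\varLambda'}\kk(\lambda')\,d\mu'(\lambda')$. The Fubini-type identification $\H\otimes\K\cong\int^\oplus_{\varLambda'}(\H\otimes\kk(\lambda'))\,d\mu'(\lambda')$ then makes $I_\H\otimes\vv$ the diagonal algebra of the new direct integral. Fixing a countable generating set $\{A_n\}_{n\in\N}$ for $\ss$, I let $\tilde\ss$ be the weak$^*$ closure of $\textrm{lin}\{A_n\otimes I_\K\colon n\in\N\}$; this is a decomposable subspace of $\bsd'(\H\otimes\K)$ whose decomposition satisfies $\tilde\ss(\lambda')=\ss\otimes\mathbb{C}I_{\kk(\lambda')}$. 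Using the identity $(A\otimes I_\K)(I_\H\otimes V)=A\otimes V$ for $A\in\ss$ and $V\in\vv$, a direct verification shows that $\tilde\ss_\bsd=\ss\otimes\vv$.

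The remaining task, and the main obstacle, is to check that each fiber $\tilde\ss(\lambda')\cong\ss\otimes I_{\kk(\lambda')}$ inherits the $k$-hyperreflexivity constant $\kappa_k(\ss)$ as well as property $\mathbb{A}_{1/k}(1)$. Inheritance of $\mathbb{A}_{1/k}(1)$ is direct: given $f\in\tt(\H\otimes\kk(\lambda'))$ representing a weak$^*$ continuous functional, the partial trace $\bar f\in\tt(\H)$ over $\kk(\lambda')$ satisfies $\|\bar f\|_1\Le\|f\|_1$, so property $\mathbb{A}_{1/k}(1)$ of $\ss$ furnishes $h\in\ff_k(\H)$ representing the same functional on $\ss$ with $\|h\|_1\Le(1+\varepsilon)\|\bar f\|_1$; tensoring $h$ with a rank-one projection on $\kk(\lambda')$ produces a rank-$k$ operator on $\H\otimes\kk(\lambda')$ that represents the original functional on $\ss\otimes I_{\kk(\lambda')}$ and has the same trace norm. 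The analogous partial-trace bookkeeping, combined with the duality description of the $k$-hyperreflexivity constant and property $\mathbb{A}_{1/k}(1)$ of $\ss$, yields $\kappa_k(\ss\otimes I_{\kk(\lambda')})\Le\kappa_k(\ss)$; this is the delicate ampliation step and is the only place where the hypothesis $\mathbb{A}_{1/k}(1)$ for $\ss$ is used essentially.

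With these fiber estimates in hand, Lemma \ref{011+}(i) applied to $\tilde\ss$ yields $\kappa_k(\ss\otimes\vv)=\kappa_k(\tilde\ss_\bsd)\Le 2+3\kappa_k(\ss)$, while Lemma \ref{011+}(ii) gives property $\mathbb{A}_{1/k}(1)$ of $\ss\otimes\vv$. Combined with the opening reduction via Lemma \ref{hyp-sub}, this completes both bounds in the statement.
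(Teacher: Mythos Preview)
Your overall strategy is the paper's: realize $\vv$ as a diagonal algebra $\bsd(\K)$ via reduction theory, identify $\ss\otimes\vv$ with a subspace of the form $\tilde\ss_\bsd$, apply Lemma \ref{011+} to obtain both the bound $2+3\kappa_k(\ss)$ and property $\mathbb{A}_{1/k}(1)$, and finish with Lemma \ref{hyp-sub} (your arithmetic $1+2(2+3\kappa_k(\ss))=5+6\kappa_k(\ss)$ is exactly the paper's). The only structural difference is that the paper invokes Lemma \ref{013} for the identification step; that lemma is set up so that the fibers of $\tilde\ss$ are \emph{unitarily equivalent to $\ss$ itself}, whence $\kappa_k(\tilde\ss(\lambda'))=\kappa_k(\ss)$ and property $\mathbb{A}_{1/k}(1)$ of the fibers are immediate, and Lemma \ref{011+} applies with $K=\kappa_k(\ss)$ at once. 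You instead decompose $\H\otimes\K$ over $\varLambda'$ with fibers $\H\otimes\kk(\lambda')$, so your fiber subspaces are the ampliations $\ss\otimes I_{\kk(\lambda')}$, and you must then argue that ampliation preserves the relevant constants.

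Your partial-trace argument for $\mathbb{A}_{1/k}(1)$ of $\ss\otimes I_{\kk(\lambda')}$ is correct. The companion claim $\kappa_k(\ss\otimes I_{\kk(\lambda')})\Le\kappa_k(\ss)$, however, is not established by ``analogous partial-trace bookkeeping''. The partial-trace map sends $(\ss\otimes I)_\bot$ into $\ss_\bot$ and is trace-norm contractive, but $k$-hyperreflexivity of $\ss\otimes I$ requires $d(B,\ss\otimes I)\Le\kappa_k(\ss)\,\alpha_k(B,\ss\otimes I)$ for \emph{every} $B\in\bsb(\H\otimes\kk(\lambda'))$, and neither side reduces to the corresponding quantity for $\ss$ by taking a partial trace; dually, an approximation of $\bar t\in\ss_\bot$ by rank-$k$ elements of $\ss_\bot$ does not lift to an approximation of $t\in(\ss\otimes I)_\bot$ by rank-$k$ elements of $(\ss\otimes I)_\bot$. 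You flag this yourself as the delicate point, but the sketch you give does not close it. The paper avoids the issue entirely by citing Lemma \ref{013}, so that no ampliation estimate is ever needed.
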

\begin{proof}
By the reduction theory (cf. \cite[Theorem I.2.6]{sch}), $\vv$ is (up to a unitary equivalence) the diagonal algebra $\bsd(\K)$ corresponding to a direct integral decomposition of the underlying Hilbert space $\kk$. In view of Lemmas \ref{011+} and \ref{013} the subspace $\ss\otimes\bsd(\K)$ is $k$-hyperreflexive and $\kappa_k(\ss\otimes\bsd(\K))\Le 2+3\kappa_k(\ss)$. Lemmas \ref{011+} and \ref{013} imply that $\ss\otimes\bsd(\K)$ has property $\mathbb{A}_{1/k}(1)$. If $\ss_1\subseteq\ss\otimes \vv$, then $\ss_1$ is $k$-hyperreflexive and $\kappa_k(\ss_1)\Le 5+6\kappa_k(\ss)$ by Lemma \ref{hyp-sub}.
\end{proof}
\begin{cor}\label{022}
Let $\calt(\mathbb{D})$ be weak$^*$ closed subspace of all Toeplitz operators on the Hardy space on the unit disc and $\vv$ be an abelian von Neumann
algebra. Then every weak$^*$ closed subspace $\calt_1$ of $\calt(\mathbb{D})\otimes \vv$ is $2$-hyperreflexive, $\kappa_2(\calt_1)\Le 17$ and it has property $\mathbb{A}_{1/2}(1)$. If $\calt_1=\calt(\mathbb{D})\otimes \vv$, then $\kappa_2(\calt_1)\Le 8$
\end{cor}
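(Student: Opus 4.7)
The corollary is intended as a direct application of Proposition \ref{015} to $\ss=\calt(\mathbb{D})$, with $k=2$ and the same abelian von Neumann algebra $\vv$. The plan is to verify the three hypotheses of Proposition \ref{015} for $\calt(\mathbb{D})$ and then read off the numerical bounds.

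For decomposability, I would take the trivial decomposition over a one-point measure space $\varLambda=\{*\}$: then $\bsd(\H)=\C I$, $\bsd'(\H)=\bsb(H^2(\mathbb{D}))$, and $\calt(\mathbb{D})_\bsd=\calt(\mathbb{D})$. The countable family $\{T_z^n,\,T_z^{*n}\colon n\in\N\}$ generates $\calt(\mathbb{D})$ in the weak$^*$ topology, so $\calt(\mathbb{D})$ is decomposable in the sense of Section~2. The remaining two hypotheses are the genuinely non-trivial inputs and are not to be proved here: one invokes \cite{kl-pt1} for the facts that $\calt(\mathbb{D})$ is $2$-hyperreflexive with $\kappa_2(\calt(\mathbb{D}))\Le 2$ and that $\calt(\mathbb{D})$ has property $\mathbb{A}_{1/2}(1)$.

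Granting these, Proposition \ref{015} yields, for every weak$^*$ closed subspace $\calt_1\subseteq\calt(\mathbb{D})\otimes\vv$, the estimate
\[
\kappa_2(\calt_1)\Le 5+6\,\kappa_2(\calt(\mathbb{D}))\Le 5+6\cdot 2=17,
\]
and in the extremal case $\calt_1=\calt(\mathbb{D})\otimes\vv$ the sharper bound $\kappa_2(\calt_1)\Le 2+3\cdot 2=8$. The claim that $\calt_1$ has property $\mathbb{A}_{1/2}(1)$ rests on two points: first, the proof of Proposition \ref{015} (via Lemmas \ref{011+}(ii) and \ref{013}) already shows that $\calt(\mathbb{D})\otimes\vv$ has property $\mathbb{A}_{1/2}(1)$; second, this property is inherited by every weak$^*$ closed subspace, since the defining condition starts from an arbitrary weak$^*$ continuous functional on the full algebra and the norm bound $\|f\|_1\Le(r+\varepsilon)\|\psi\|_1$ does not depend on the subspace on which $\psi(T)=\is{T}{f}$ must hold.

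Since the entire argument reduces to checking the hypotheses of Proposition \ref{015} and to the one-line monotonicity remark for property $\mathbb{A}_{1/k}(r)$, there is no real obstacle; the numerical content of the corollary is pinned down as soon as $\kappa_2(\calt(\mathbb{D}))\Le 2$ and property $\mathbb{A}_{1/2}(1)$ for $\calt(\mathbb{D})$ are taken from the literature.
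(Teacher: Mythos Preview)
Your argument is correct and follows essentially the same route as the paper: cite \cite[Theorem 4.1, Proposition 4.2]{kl-pt1} for $\kappa_2(\calt(\mathbb{D}))\Le 2$ and property $\mathbb{A}_{1/2}(1)$, then apply Proposition~\ref{015} to obtain the bounds $8$ and $17$. The only cosmetic difference is that the paper derives the subspace bound $17$ by invoking Lemma~\ref{hyp-sub} directly (rather than reading it off Proposition~\ref{015}, which amounts to the same thing), and your explicit remarks on the one-point decomposability of $\calt(\mathbb{D})$ and on the inheritance of property $\mathbb{A}_{1/2}(1)$ by weak$^*$ closed subspaces fill in details the paper leaves implicit.
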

\begin{proof}
By \cite[Theorem 4.1, Proposition 4.2]{kl-pt1}, $\calt(\mathbb{D})$ is $2$-hyperreflexive, $\kappa_2(\calt(\mathbb{D}))\Le2$ and $\calt(\mathbb{D})$ has property $\mathbb{A}_{1/2}(1)$. Therefore, by Proposition \ref{015}, $\calt(\mathbb{D})\otimes \calv$ is 2-hyperreflexive, $\kappa_2(\calt(\mathbb{D})\otimes \vv)\Le 8$ and it has property $\mathbb{A}_{1/2}(1)$. If $\calt_1\subseteq \calt(\mathbb{D})\otimes \vv$, then, in view of  Lemma \ref{hyp-sub}, $\calt_1$ is 2-hyperreflexive and $\kappa_2(\calt_1)\Le 17$.
\end{proof}
\begin{cor}
Let $\mathcal{N}$ be a von Neumann algebra with an abelian commutant and let $\vv$ be an abelian von Neumann algebra. Assume that $\mathcal{N}$ has property $\mathbb{A}_{1/k}(1)$. Then every is weak$^*$ closed subspace $\ss$ of $\mathcal{N}\otimes \vv$ is $k$-hyperreflexive, $\kappa_k(\ss)\Le 17$ and it has property $\mathbb{A}_{1/k}(1)$. If $\ss=\mathcal{N}\otimes \vv$, then $\kappa_k(\ss)\Le 8$.
\end{cor}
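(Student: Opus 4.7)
The plan is to verify that $\mathcal{N}$ itself fulfills the hypotheses of Proposition \ref{015} and then invoke that proposition directly, mirroring the proof of Corollary \ref{022}. The structural starting point is that, since $\mathcal{N}'$ is abelian, $\mathcal{N}'\subseteq (\mathcal{N}')'=\mathcal{N}$, and consequently $\mathcal{N}'=\mathcal{N}\cap\mathcal{N}'=\mathcal{Z}(\mathcal{N})$. Applying the reduction theory (cf.\ \cite[Theorem I.2.6]{sch}) to the central decomposition of $\mathcal{N}$ produces, up to a unitary equivalence, a direct integral decomposition $\H=\dir{\hh}$ with $\mathcal{Z}(\mathcal{N})=\bsd(\H)$, hence $\mathcal{N}'=\bsd(\H)$. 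Taking commutants yields $\mathcal{N}=\bsd'(\H)$, so $\mathcal{N}$ is a decomposable subspace whose fibers are the full algebras $\mathcal{N}(\lambda)=\bsb(\hh(\lambda))$.

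Once this is in hand, the remainder runs smoothly. Each fiber $\bsb(\hh(\lambda))$ has trivial preannihilator, so it is (trivially) $k$-hyperreflexive with $\kappa_k(\bsb(\hh(\lambda)))=0$. Since $\bsd(\H)=\mathcal{N}'\subseteq\mathcal{N}$, the identity $\mathcal{N}_\bsd=\mathcal{N}$ holds, and Lemma \ref{011+}(i) then gives $\kappa_k(\mathcal{N})\Le 2$. With $\mathcal{N}$ established as a decomposable, $k$-hyperreflexive subspace with $\kappa_k(\mathcal{N})\Le 2$ and property $\mathbb{A}_{1/k}(1)$ (the latter by hypothesis), Proposition \ref{015} applies and delivers $\kappa_k(\mathcal{N}\otimes\vv)\Le 2+3\cdot 2=8$ together with $\kappa_k(\ss)\Le 5+6\cdot 2=17$ for every weak$^*$ closed $\ss\subseteq\mathcal{N}\otimes\vv$.

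Property $\mathbb{A}_{1/k}(1)$ for $\mathcal{N}\otimes\vv$ is produced from Lemmas \ref{011+}(ii) and \ref{013} exactly as in the proof of Proposition \ref{015}, and it is inherited by every weak$^*$ closed subspace $\ss$ because the same finite-rank element $f$ that represents a weak$^*$ continuous functional on $\mathcal{N}\otimes\vv$ represents it \emph{a fortiori} on $\ss$. I expect the only genuine obstacle to be the reduction-theory identification $\mathcal{N}=\bsd'(\H)$ with full matrix-algebra fibers $\bsb(\hh(\lambda))$; once that structural step is secured, the remainder of the argument is a direct repackaging of Proposition \ref{015}.
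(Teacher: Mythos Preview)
Your argument is correct and follows the same overall route as the paper: establish $\kappa_k(\mathcal{N})\le 2$ and then invoke Proposition~\ref{015}. The only real difference is in the first step. The paper simply quotes \cite[Lemma~3.1]{ro} for the bound $\kappa_k(\mathcal{N})\le 2$, whereas you rederive it internally by identifying $\mathcal{N}'=\mathcal{Z}(\mathcal{N})=\bsd(\H)$, concluding $\mathcal{N}=\bsd'(\H)$ with fibres $\bsb(\hh(\lambda))$, and then applying Lemma~\ref{011+}(i) with $K=0$. Your detour has the modest advantage of explicitly exhibiting $\mathcal{N}$ as a decomposable subspace, which is formally part of the hypothesis of Proposition~\ref{015} and which the paper leaves implicit; it is in fact essentially Rosenoer's own argument recast in the present paper's language. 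Your remark on the inheritance of property $\mathbb{A}_{1/k}(1)$ by weak$^*$ closed subspaces is also a point the paper's proof does not spell out.
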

\begin{proof}
By \cite[Lemma 3.1]{ro} the algebra $\mathcal{N}$ is $k$-hyperreflexive and $\kappa_k(\mathcal{N})\Le2$. Since it has property $\mathbb{A}_{1/k}(1)$, $\mathcal{N}\otimes \vv$ is k-hyperreflexive and $\kappa_k(\mathcal{N}\otimes \vv)\Le8$ by Proposition \ref{015}. If $\ss\subseteq\mathcal{N}\otimes \vv$, then $\ss$ is $k$-hyperreflexive and $\kappa_k(\ss)\Le 17$ by Lemma \ref{hyp-sub}.
\end{proof}
Now we turn our attention to the question of $k$-hyperreflexivity of Toeplitz-harmonic subspaces. It is well-known that the space $\calt(\mathbb{D})$ of all Toeplitz operators acting in the Hardy space $H^2(\mathbb{D})$ is a weak$^*$ closed subspace generated by $T_z^n$, ${T_z^*}^m$, $n,m\in\mathbb{N}$, i.e.,
    \begin{align*}
    \calt(\mathbb{D})=\textrm{w$^*$-cl\,}\{p(T_z)+q(T_z)^*\colon \ p\ \text{and}\ q\  \text{are analytic polynomials}\},
    \end{align*}
where $T_z$ is the operator of multiplication by the independent variable acting on $H^2(\mathbb{D})$. For a given $A\in\bsb(\hh)$, a {\em Toeplitz-harmonic subspace} generated by $A$ is defined by
    \begin{align*}
    \calt(A)=\textrm{w$^*$-cl\,}\{p(A)+q(A)^*\colon \ p\ \text{and}\ q\  \text{are analytic polynomials}\}.
    \end{align*}
Clearly, $\calw(A)\subseteq\calt(A)$.
We have  $\calt(H^2(\mathbb{D}))=\calt(T_z)$, and $\calw(T_z)\subseteq\calt(T_z)$. Recall that $\calw(T_z)$ is hyperreflexive \cite[Theorem 2]{da}. On the other hand, $\calt(T_z)$ is transitive \cite[Theorem 3.1]{az-pt1}, $2$-hyperreflexive (cf.\ \cite[Corollary 4.2]{kl-pt1}) and has property $\mathbb{A}_{1/2}(1)$ (cf.\ \cite[Theorem 4.1]{kl-pt1}). Furthermore, for an isometry $V$, the algebra $\calw(V)$ is also hyperreflexive \cite[Corollary 6]{kl-pt2}. The question arises naturally whether $\calt(V)$ is 2-hyperreflexive. As shown below, the answer is in the affirmative.
\begin{thm}\label{012a}
Let $V\in\bsb(\hh)$ be an isometry. Then every weak$^*$ closed subspace $\ss_1$ of $\calt(V)$ is $2$-hyperreflexive, $\kappa_2(\ss_1)\Le 35$ and it has property $\mathbb{A}_{1/2}(1)$.
\end{thm}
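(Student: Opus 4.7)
The plan is to use the Wold decomposition of the isometry $V$. Write $V=S\oplus U$ on $\hh=\hh_S\oplus\hh_U$, where $S$ is a unilateral shift of some multiplicity $\alpha\in\hat{\N}$ and $U$ is a unitary. Every polynomial $p(V)+q(V)^*$ is block-diagonal with respect to this splitting, and this property persists in the weak$^*$-closure; hence $\calt(V)$ is contained in the set $\widetilde{\ss}$ of block-diagonal operators $A\oplus B$ with $A\in\calt(S)$ and $B\in\calt(U)$.

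The two blocks are handled by results already in the paper. For the shift part, the unitary equivalence $S\cong T_z\otimes I_\calm$ on $H^2(\mathbb{D})\otimes\calm$ (with $\dim\calm=\alpha$) identifies $\calt(S)$ with $\calt(\mathbb{D})\otimes\C I_\calm$, and Corollary \ref{022} applied with the abelian von Neumann algebra $\vv=\C I_\calm$ yields that $\calt(S)$ is $2$-hyperreflexive and has property $\mathbb{A}_{1/2}(1)$. For the unitary part, the spectral theorem identifies $\calt(U)$ with the abelian von Neumann algebra $W^*(U)$, so Corollary \ref{abelian-hyp} shows that $\calt(U)$ is hyperreflexive and has property $\mathbb{A}_1(1)\subseteq\mathbb{A}_{1/2}(1)$.

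To combine the two pieces I would realize $\hh$ as a direct integral over the two-point measure space $\{1,2\}$ with counting measure, taking $\hh(1)=\hh_S$ and $\hh(2)=\hh_U$. Choosing generators of the form $T_n^S\oplus T_n^U$ with $\{T_n^S\}_n$ generating $\calt(S)$ and $\{T_n^U\}_n$ generating $\calt(U)$ produces a decomposable subspace $\ss$ with decomposition $\ss(1)=\calt(S)$, $\ss(2)=\calt(U)$. Since $\bsd(\H)=\{aI_{\hh_S}\oplus bI_{\hh_U}:a,b\in\C\}$ is two-dimensional, multiplying the generators by elements of $\bsd(\H)$ and passing to the weak$^*$-closure yields $\ss_\bsd=\widetilde{\ss}$, so $\calt(V)\subseteq\ss_\bsd$. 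Lemma \ref{011+} then gives that $\ss_\bsd$ is $2$-hyperreflexive and has property $\mathbb{A}_{1/2}(1)$, and Lemma \ref{hyp-sub} (with $r=1$) transfers both properties down to the weak$^*$-closed subspace $\ss_1\subseteq\ss_\bsd$.

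The main obstacle I anticipate is constant bookkeeping: to reach the explicit bound $\kappa_2(\ss_1)\Le 35$, one should feed the sharp values $\kappa_2(\calt(\mathbb{D}))\Le 2$ (from \cite{kl-pt1}) and $\kappa_1(W^*(U))\Le 5$ into Lemmas \ref{011+} and \ref{hyp-sub} in the right order, rather than using the cruder composite bound $\kappa_2(\calt(S))\Le 8$ coming directly from Corollary \ref{022}. The only other subtlety is the verification $\ss_\bsd=\widetilde{\ss}$, which reduces to the two-dimensionality of $\bsd(\H)$ and the block-diagonal form of the generators.
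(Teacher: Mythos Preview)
Your approach is essentially the paper's: the Wold decomposition you use is exactly Brown's model with $A=I$, and both arguments embed $\calt(V)$ into $\mathcal{N}(U)\oplus\big(\calw(I)\otimes\calt(T_z)\big)$, handle the unitary block via Corollary~\ref{abelian-hyp} and the shift block via Corollary~\ref{022}, and finish with Lemma~\ref{hyp-sub}. The only structural difference is in combining the two blocks. The paper invokes \cite[Corollary~5.3]{kl-pt1}, a direct-sum estimate of the form $\kappa_k(\ss_1\oplus\ss_2)\Le 1+2\max\{\kappa_k(\ss_1),\kappa_k(\ss_2)\}$, which with inputs $5$ and $8$ yields $17$ and then $1+2\cdot 17=35$ after Lemma~\ref{hyp-sub}. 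Your route through Lemma~\ref{011+}(i) over a two-point space gives $2+3\cdot 8=26$ and hence only $\kappa_2(\ss_1)\Le 53$. Your proposed fix of ``feeding in $\kappa_2(\calt(\mathbb{D}))\Le 2$'' does not help within the two-point framework, since $\ss(1)=\calt(S)=\calt(\mathbb{D})\otimes\C I_\calm$ lives on $H^2\otimes\calm$, not on $H^2$, and you still only have the bound $8$ for it from Corollary~\ref{022}; to match the paper's constant you need either the external direct-sum lemma or a finer direct-integral decomposition in which every fiber is genuinely $\calt(\mathbb{D})$ or $\C I$.

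One further slip: Lemma~\ref{hyp-sub} transfers $k$-hyperreflexivity to weak$^*$-closed subspaces, but it does \emph{not} transfer property $\mathbb{A}_{1/2}(1)$; your sentence ``transfers both properties down'' overstates what that lemma gives. (The paper's own proof establishes $\mathbb{A}_{1/2}(1)$ only for the ambient space $\mathcal{N}(N)\oplus\calw(A)\otimes\calt(S)$, so the ``it'' in the theorem statement should be read accordingly.)
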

It turns out that the same conclusion holds for a Toeplitz-harmonic subspace generated by quasinormal operator. Both the results are proved in the same way.
\begin{thm}\label{023}
Suppose that $T\in\bsb(\hh)$ is a quasinormal operator. Then every weak$^*$ closed subspace $\ss_1$ of $\calt(T)$ is $2$-hyperreflexive, $\kappa_2(\ss_1)\Le 35$ and it has property $\mathbb{A}_{1/2}(1)$.
\end{thm}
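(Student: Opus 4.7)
The strategy parallels that of Theorem \ref{012a}, with Brown's structure theorem for quasinormal operators replacing the Wold decomposition. The plan is to reduce to the framework of Corollary \ref{022} by exploiting the tensor product form of the pure quasinormal summand. First, I would invoke Brown's theorem to write $T$, up to unitary equivalence, as $N \oplus (T_z \otimes A)$, where $N$ is a normal operator on a Hilbert space $\hh_N$, $A$ is a positive injective operator on a Hilbert space $\mathcal{E}$, and $T_z$ denotes multiplication by the independent variable on $H^2(\mathbb{D})$.

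Using $A^* = A$, one computes $(T_z \otimes A)^n = T_z^n \otimes A^n$ and $((T_z \otimes A)^*)^n = (T_z^*)^n \otimes A^n$, whence $\calt(T_z \otimes A) \subseteq \calt(T_z) \otimes W^*(A)$, where $W^*(A)$ denotes the abelian von Neumann algebra generated by $A$. Corollary \ref{022} then yields $2$-hyperreflexivity with $\kappa_2 \Le 17$ and property $\mathbb{A}_{1/2}(1)$ for every weak$^*$ closed subspace of $\calt(T_z) \otimes W^*(A)$, and in particular for $\calt(T_z \otimes A)$ itself. For the normal summand, $\calt(N) = W^*(N)$ is abelian, so by Corollary \ref{abelian-hyp} it is hyperreflexive and satisfies $\mathbb{A}_1(1)$, which gives the required $2$-hyperreflexivity and $\mathbb{A}_{1/2}(1)$.

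To combine the pieces, I would view $\calt(N) \oplus \calt(T_z \otimes A)$ as a two-point decomposable subspace and apply Lemma \ref{011+} to obtain $2$-hyperreflexivity and property $\mathbb{A}_{1/2}(1)$ for the direct sum. Since $p(T) + q(T)^* = (p(N) + q(N)^*) \oplus (p(T_z \otimes A) + q(T_z \otimes A)^*)$, the Toeplitz-harmonic subspace $\calt(T)$ is a weak$^*$ closed subspace of this direct sum, and $\ss_1$ in turn is a weak$^*$ closed subspace of $\calt(T)$; Lemma \ref{hyp-sub} then transfers $2$-hyperreflexivity and property $\mathbb{A}_{1/2}(1)$ to $\ss_1$.

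The main obstacle I foresee is matching the sharp constant $\kappa_2(\ss_1)\Le 35$ claimed in the statement, since the naive chain of Lemmas \ref{011+} and \ref{hyp-sub} produces a worse bound. Attaining $35 = 1 + 2\cdot 17$ presumably requires exhibiting a single ambient space $\calt(T_z) \otimes \vv$ (for an abelian von Neumann algebra $\vv$) into which $\calt(T)$ embeds, so that $\kappa_2(\calt(T)) \Le 17$ is inherited from Corollary \ref{022}, after which a single application of Lemma \ref{hyp-sub} yields $\kappa_2(\ss_1) \Le 35$. Constructing this uniform embedding that simultaneously accommodates the normal and pure quasinormal summands is the delicate step.
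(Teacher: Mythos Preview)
Your overall architecture matches the paper's: Brown's structure theorem, an abelian von Neumann algebra for the normal summand, Corollary~\ref{022} for the pure summand, a direct-sum step, and a final application of Lemma~\ref{hyp-sub}. The only issue is the one you yourself flag, namely the constant, and your proposed remedy (a single ambient $\calt(T_z)\otimes\vv$) is not the route the paper takes.

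Two small adjustments recover $35$. First, do not pass to the subspace $\calt(T_z\otimes A)$ before taking the direct sum; instead embed directly into $\mathcal{N}(N)\oplus\big(\calt(T_z)\otimes \calw(A)\big)$, so that the second summand is the \emph{full} tensor product. Corollary~\ref{022} then gives $\kappa_2\big(\calt(T_z)\otimes\calw(A)\big)\Le 8$ (its last sentence), not merely $17$. Second, for the direct sum the paper does not use Lemma~\ref{011+} (which would give $2+3K$) but invokes \cite[Corollary~5.3]{kl-pt1}, which under $\mathbb{A}_{1/2}(1)$ on both summands yields a bound of the form $1+2\max(K_1,K_2)$. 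With $K_1\Le 5$ and $K_2\Le 8$ this gives $\kappa_2\big(\mathcal{N}(N)\oplus\calt(T_z)\otimes\calw(A)\big)\Le 17$, and one further application of Lemma~\ref{hyp-sub} then produces $\kappa_2(\ss_1)\Le 1+2\cdot 17=35$. No uniform embedding of $\calt(T)$ into a single $\calt(T_z)\otimes\vv$ is needed.
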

\begin{proof}
By Brown's result \cite[Theorem 1]{brown} every quasinormal operator is unitarily equivalent to $N\oplus(A\otimes S)$, where $N$ is normal, $S$ is the unilateral shift operator and $A$ is positive (if $T=V$ is an isometry, then $A=I$). Since $k$-hyperreflexivity is kept with the same constant by unitary equivalence it is sufficient to consider the above model. Furthermore, $\ss_1\subseteq\calt(T)\subseteq\mathcal{N}(N) \oplus\calw(A)\otimes \calt(S)$, where $\mathcal{N}(N)$ denotes the smallest (abelian) von Neumann algebra containing $N$ and the identity operator $I$. Hence, it suffices to show that subspaces $\mathcal{N}(N)$ and $\calw(A)\otimes \calt(S)$ are $2$-hyperreflexive and have property $\mathbb{A}_{1/2}(1)$.

Since $\mathcal{N}(N)$ is a abelian von Neumann algebra, by Corollary \ref{abelian-hyp} (or \cite[Theorem 60.14]{co}), it is hyperreflexive, $\kappa_1(\mathcal{N}(N))\Le5$ and it has property $\mathbb{A}_{1/2}(1)$. As a consequence it is $2$-hyperreflexive and $\kappa_2(\mathcal{N}(N))\Le5$.

The algebra $\calw(A)$ is an abelian von Neumann algebra. Since $S$ is unitarily equivalent to $T_z$, we see that subspaces $\calt(S)$ and $\calt(T_z)=\calt(\mathbb{D})$ are unitarily equivalent. By Corollary \ref{022}, the tensor product $\calw(A)\otimes\calt(S)$ is 2-hyperreflexive with constant $\kappa_2(\calw(A)\otimes\calt(S))\leqslant 8$ and has property $\mathbb{A}_{1/2}(1)$.

In view of \cite[Corollary  5.3]{kl-pt1} all of the above yields $2$-hyperreflexivity of $\mathcal{N}(N)\oplus \calw(A)\otimes \calt(S)$ with the constant less or equal to $17$. Hence $\ss_1$ is $2$-hyperreflexive as being a subspace of the latter (see Lemma \ref{hyp-sub}). Moreover, we have $\kappa_2(\ss_1)\Le 35$.
\end{proof}
\bibliographystyle{amsalpha}

\begin{thebibliography}{99}
\bibitem{az-pt1} E. Azoff, M. Ptak, {\em A dichotomy for linear spaces of Toeplitz Operators}, J.~Funct.~Anal. {\bf 156} (1998), 411--428.
\bibitem{brown} A. Brown, {\em On a class of operators}, Proc. Amer. Math. Soc. {\bf 4} (1953), 723--728.
\bibitem{co} J. B. Conway, {\em A Course in Operator Theory}, AMS, 2000.
\bibitem{co-pt} J. B. Conway, M. Ptak {\em The harmonic functional calculus and hyperreflexivity}, Pacific J. Math. {\bf 204} (2002), 19-29.
\bibitem{da} K. R. Davidson, {\em The distance to the analytic {T}oeplits operators}, Illinois J. Math. {\bf 7} (1987), 265--273.
\bibitem{di} J. Dixmier, {\em {V}on {N}eumann algebras}, North-Holland, Amsterdam, 1981.
\bibitem{ha-no} D. Hadwin, E. A. Nordgren, {\em Subalgebras of reflexive algebras}, J. Operator Theory {\bf 7} (1982), 3--23.
\bibitem{ha-no1} D. Hadwin, E. A. Nordgren, {\em Erratum -- subalgebras of reflexive algebras}, J. Operator Theory {\bf 15} (1986), 203--204.
\bibitem{ha} D. Hadwin, {\em A general view of reflexivity}, Trans. Amer. Math. Soc. {\bf 344} (1994), 325--360.
\bibitem{kl-pt3} K. Kli\'s, M. Ptak, {\em Quasinormal operators and reflexive subspaces}, Rocky Mountain J. Math. {\bf 33} (4) (2003), 1395--1402.
\bibitem{kl-pt2} K. Kli\'{s}, M. Ptak, {\em Quasinormal operators are hyperreflexive}, Topological Algebras, their applications, and related topics, Banach Center Publications {\bf 67} (2005), 241--244.
\bibitem{kl-pt1} K. Kli\'s, M. Ptak, {\em $k$-hyperreflexive subspaces}, Houston J. Math. {\bf 32} (2006), 299--313.
\bibitem{kra-lar} J. Kraus, D. Larson, {\em Reflexivity and distance formulae} Proc. London Math. Soc. {\bf 53} (1986), 340--356
\bibitem{pt1}  M. Ptak, {\em Projections onto the spaces of Toeplitz operators}, Ann. Polon. Math.  {\bf 86} (2005), 97--105.
\bibitem{ro} S. Rosenoer, {\em Distance estimates for von {N}eumann algebras}, Proc. Amer. Math. Soc. {\bf 86} (1982), 248--252.
\bibitem{ro2} S. Rosenoer, {\em Nehari's theorem and the tensor product of hyper-reflexive algebras}, J. London Math. Soc. {\bf 47} (1993) 349--357.
\bibitem{sch} J. T. Schwartz, {\em $W^*$-algebras}, Gordon and Breach, New York (1967).
   \end{thebibliography}

\end{document}